\newtheorem{thm}{Theorem}
\newtheorem{lem}{Lemma}
\newtheorem{prop}{Proposition}
\newtheorem{cor}{Corollary}[section]
{\theoremstyle{remark}
\newtheorem {Rem}{Remark}}
{\theoremstyle{definition}
}
\def\fchar{\mathrm{char}}
\newcommand{\mF}{\mathbb F}
                                                   \newcommand{\CC}{\mathcal C}
\newcommand{\eps}{\varepsilon}
\newcommand{\la}{\lambda}
\newcommand{\Eta}{\mathrm{H}}
\newcommand{\beq}{\begin{equation}}
\newcommand{\eeq}{\end{equation}}
\newcommand{\QQ}{\mathbb Q}
\begin{document}
\begin{abstract}
Let $\mathcal{C}$ be a hyperelliptic curve of genus $g>1$ over an algebraically closed field $K$ of characteristic zero and $\mathcal{O}$ one of the $(2g+2)$ Weierstrass points in $\mathcal{C}(K)$.
Let $J$ be the jacobian of $\mathcal{C}$, which is a $g$-dimensional abelian variety over $K$.
Let us consider the canonical embedding of $C$ into $J$ that sends $\mathcal{O}$ to the zero of the group law on $J$. This embedding allows us to identify $\mathcal{C}(K)$ with a certain subset of the commutative group $J(K)$.

A special case of the famous theorem of Raynaud (Manin--Mumford conjecture) asserts that the set of torsion points in $\mathcal{C}(K)$ is finite.
It is well known that the points of order 2 in $\mathcal{C}(K)$ are exactly the ``remaining'' $(2g+1)$ Weierstrass points. One of the authors \cite{ZarhinIzv19} proved that  there are no torsion points of order $n$ in $\mathcal{C}(K)$ if $3\le n\le 2g$. So, it is natural to study torsion points of order $2g+1$  (notice that the number of such points in $\mathcal{C}(K)$ is always even).
Recently, the authors proved that there are infinitely many (for a given $g$)  mutually nonisomorphic pairs $(\mathcal{C},\mathcal{O})$ such that
$\mathcal{C}(K)$ contains at least four points of order $2g+1$.

In the present paper we prove that (for a given $g$) there are at most finitely many (up to a isomorphism) pairs $(\mathcal{C},\mathcal{O})$ such that
$\mathcal{C}(K)$ contains at least six points of order $2g+1$.

\end{abstract}

 \title[Torsion points of small order ]{Torsion points of small order on hyperelliptic curves}
\author {Boris M. Bekker}
\address{St. Petersburg State University, Department of Mathematics and Mechanics,   Universitetsky prospect, 28, Peterhof, St. Petersburg, 198504, Russia}
\email{ bekker.boris@gmail.com}
\author {Yuri G. Zarhin}
\address{Pennsylvania State University, Department of Mathematics, University Park, PA 16802, USA}
\email{zarhin@math.psu.edu}
\thanks{The second named author (Y.Z.) was partially supported by Simons Foundation Collaboration grant   \# 585711.}

\subjclass[2010]{14H40, 14G27, 11G10, 11G30}

\keywords{Hyperelliptic curves, Jacobians, torsion points}
\maketitle
\section{Introduction}

Let $K$ be an algebraically closed field of characteristic $\ne 2$ and  $\mathcal{C}$  a  smooth irreducible projective curve   over $K$
 of genus $g>1$. Let us fix a point $\mathcal{O}\in C(K)$.
Let $J$ be the jacobian of $\mathcal{C}$, which is a $g$-dimensional abelian variety over $K$.  There is a canonical closed
regular
 embedding $\mathcal{C} \hookrightarrow J$ that assigns to a point $P \in \mathcal{C}(K)$ the linear equivalence class of the divisor $(P) -(\mathcal{O})$ and sends $\mathcal{O}$ to the zero of the group law on $J$. We identify $\mathcal{C}$ with its image in $J$.  Then $\mathcal{C}(K)$ becomes a subset of the commutative group $J(K)$. A  theorem of Raynaud (Manin--Mumford conjecture) asserts that if $\mathrm{char}(K)=0$, then for any embedding $\mathcal{C} \hookrightarrow J$ the set of torsion points in $\CC(K)$ is finite \cite{Raynaud}. The importance of  determining explicitly the finite set occurring   in Raynaud's theorem
for specific curves  was
pointed out
by R. Coleman, G. Anderson, K. Ribet,  M. Kim, P. Tsermias, and other authors (see \cite{Ribet}, \cite{Tsermias}, \cite{Anderson1,Anderson2}).

In this paper we continue our study of torsion points of small order on odd degree hyperelliptic curves that was started in \cite{ZarhinIzv19,BekkerZarhin20}.
 In what follows  $\mathcal{C}$ is a genus $g>1$ hyperelliptic curve over $K$, and $\mathcal{O} \in \mathcal{C}(K)$ one of the $(2g+2)$ Weierstrass points on $C$.  The pair $(\mathcal{C},\mathcal{O})$ is called a {\sl marked genus $g$ odd degree hyperelliptic curve over} $K$. Let $\iota: \mathcal{C} \to \mathcal{C}$ be the hyperelliptic involution. Its fixed points are exactly the Weierstrass points, including $\mathcal{O}$.
 It is well known that $\iota$ acts on $\mathcal{C}(K)$ as multiplication by $-1$ in $J(K)$, i.e.,
$$\iota(P)=-P \in J(K) \ \text{for all}\ P \in \mathcal{C}(K) \subset J(K).$$

We are interested in points of $\mathcal{C}(K)$ that are torsion points of small order with respect to the group law on $J$.
Clearly, $\mathcal{C}(K)$ contains the only point of order $1$, namely $\infty$. It is well known that points of order $2$ on $\mathcal{C}$ are precisely all the remaining $(2g+1)$ Weierstrass points of order $2$.   It was proven in \cite{ZarhinIzv19} that if $d$ is a positive integer such that $3 \le d \le 2g$, then $\mathcal{C}(K)$ does {\sl not} contain torsion points of order $d$. (The case $g=2$ and $d=3,4$ was done earlier in \cite{BoxallGrant00}.)

 If $d>2$ is an integer and $P\in \mathcal{C}(K)$  is a torsion point of order $d$, then  $\iota(P) \in \mathcal{C}$  is also a torsion point
of the same order $d$ and $\iota(P)\neq P$. Hence  the number of torsion points of order $d$ in  $\mathcal{C}(K)$ is {\sl even}. We write $\mathcal{C}_{d}^{*}(K)$ for the set of torsion points of order $d$ in $\mathcal{C}(K)$, whose cardinality $\#(\mathcal{C}_{d}^{*}(K))$ is a nonnegative even integer.

The following results are proven in \cite{BekkerZarhin20}. (The case $g=2$  was treated earlier in \cite{BGL01}. See also
 \cite[Example 5.3 on pp. 349--350]{DJZ}.)

\begin{itemize}
\item[(i)]
There always exists $\mathcal{C}$ such that $\mathcal{C}(K)$ contains (at least) two points of order $2g+1$.
\item[(ii)]
If $2g+1$ is a power of $\fchar(K)$,  then $\#(\mathcal{C}_{2g+1}^{*}(K))\le 2$. (One may prove that there are infinitely many mutually non-isomorphic   $\mathcal{C}$'s
with $\#(\mathcal{C}_{2g+1}^{*}(K))= 2$.)

\item[(iii)]
If $2g+1$ is {\sl not} a power of $\fchar(K)$,  then there are infinitely many mutually non-isomorphic $\mathcal{C}$ with  $\#(\mathcal{C}_{2g+1}^{*}(K))\ge 4$.
\end{itemize}

On the other hand, J. Boxall \cite[Prop. 1.3]{Boxall20} proved that
$$\#(\mathcal{C}_{2g+1}^{*}(K))\le 8g^2.$$

The aim of this paper is to prove the following assertions.

\begin{thm}
\label{finiteness3}
Let $g>1$ be an integer and $K$ be an algebraically closed field.
Suppose that one of the following conditions holds.

\begin{itemize}
\item[(a)]
$\fchar(K)=0$.
\item[(b)]
$q=2g+1$ is a prime, $p:=\fchar(K)$ is an odd prime that is a primitive root modulo $q$.
\end{itemize}

  There exists a  finite (may be empty) set $S(g,K)$ of marked genus $g$ hyperelliptic curves over $K$ that enjoys the following properties.

\begin{enumerate}
\item[(1)]
$\#(S(g,K)) \le 9 (4g-1) \binom{2g}{g}^2$.

\item[(2)]
If $(\mathcal{C},\infty)$ is a marked genus $g$ hyperelliptic curve over $K$ such that
$\#(\mathcal{C}_{2g+1}^{*}(K))> 4$, then $(\mathcal{C},\infty)$ is  isomorphic to one of the marked curves in $S(g,K)$.
\end{enumerate}
\end{thm}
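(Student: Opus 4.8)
We first translate the problem into polynomials. Write $q=2g+1$; under either hypothesis $\fchar(K)\ne2$, $q\ne0$ in $K$, and a point of $\mathcal{C}(K)\setminus\{\infty\}$ killed by $q$ has order exactly $q$ — in case (b) since $q$ is prime, in case (a) since a proper divisor $d>1$ of $q$ has $3\le d\le q/3\le 2g$ and \cite{ZarhinIzv19} forbids points of order $d$. Use the normal form: up to isomorphism a marked genus $g$ odd degree hyperelliptic curve is $\mathcal{C}_f\colon y^2=f(x)$ with $f$ monic, squarefree, of degree $q$, marked at $\infty$, and $\mathcal{C}_{f_1}\cong\mathcal{C}_{f_2}$ iff $f_2$ is obtained from $f_1$ by $x\mapsto\kappa^2 x+\beta$ followed by a rescaling keeping $f$ monic. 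The torsion criterion: for $P=(a,b)\in\mathcal{C}_f(K)$ with $b\ne0$, one has $q\cdot[(P)-(\infty)]=0$ in $J(K)$ iff $f(x)-r(x)^2=(x-a)^q$ for some $r\in K[x]$ — because a nonzero function with a single pole, of order $q$, at $\infty$ lies in $L(q\cdot\infty)\setminus L((q-1)\cdot\infty)$, hence equals $y-r(x)$ with $\deg r\le g$, and its zero divisor $\sum_{\rho}(\rho,r(\rho))$, taken over the roots $\rho$ of $f-r^2$, is $q(P)$ exactly when $f-r^2=(x-a)^q$ (after possibly replacing $P$ by $\iota(P)$). Points of order $q$ occur in $\iota$-pairs, so $\#(\mathcal{C}^{*}_{q}(K))>4$ supplies three distinct $x$-coordinates $a_1,a_2,a_3$ of such points; by an isomorphism we may take $f$ monic with $a_1=0$, $a_2=1$, $a_3=t\notin\{0,1\}$, so that
\[
f=r_1^2+x^q=r_2^2+(x-1)^q=r_3^2+(x-t)^q,\qquad r_i\in K[x],\ \deg r_i\le g.
\]

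Next we exploit the rigidity already known for two points (cf.\ \cite{BekkerZarhin20}). From the first two expressions, $r_1^2-r_2^2=(x-1)^q-x^q=:G(x)$, which is squarefree of degree $2g$ (a common root $\rho$ of $G,G'$ would force $\rho=\rho-1$, and $G(0)\ne0$); write $G=-q\prod_{i=1}^{2g}(x-\gamma_i)$ with the $\gamma_i$ distinct. Since $K[x]$ is a UFD and $\deg r_1,\deg r_2\le g=\tfrac12\deg G$, the factors $r_1-r_2$ and $r_1+r_2$ are (up to scalars) complementary products of exactly $g$ of the $x-\gamma_i$; hence for one of the $\binom{2g}{g}$ size-$g$ subsets $S\subseteq\{1,\dots,2g\}$,
\[
r_1=\tfrac12\bigl(\alpha\,G_S+\beta\,G_{S^c}\bigr),\qquad\alpha\beta=-q,\quad G_S:=\prod_{i\in S}(x-\gamma_i),\ \ G_{S^c}:=\prod_{i\notin S}(x-\gamma_i).
\]
Similarly, $r_1^2-r_3^2=(x-t)^q-x^q$, and $x^q-(x-t)^q=-t^q G(x/t)$ is squarefree of degree $2g$ with roots $t\gamma_1,\dots,t\gamma_{2g}$; so for one of the $\binom{2g}{g}$ size-$g$ subsets $U$,
\[
r_1=\tfrac12\Bigl(\gamma'\!\prod_{i\in U}(x-t\gamma_i)+\delta'\!\prod_{i\notin U}(x-t\gamma_i)\Bigr),\qquad\gamma'\delta'=-qt.
\]

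Now fix $(S,U)$ and equate the two formulas for $r_1$: a degree-$g$ polynomial identity, i.e.\ $g+1$ scalar equations, in the unknowns $\alpha,\gamma',t$ (with $\beta=-q/\alpha$, $\delta'=-qt/\gamma'$). Comparing the coefficients of $x^g$ gives $\alpha+\beta=\gamma'+\delta'=:\sigma$, so $\{\alpha,\beta\}$ and $\{\gamma',\delta'\}$ are the root-pairs of $Z^2-\sigma Z-q$ and $Z^2-\sigma Z-qt$; comparing two further coefficients — say those of $x^0$ and of $x^1$, using the elementary symmetric functions of the $\gamma_i$ (in particular $\prod_i\gamma_i=1/q$) — yields two equations in $(\sigma,t)$ only, and eliminating $\sigma$ produces a polynomial equation in $t$. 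Provided that equation is not identically zero, a degree count based on $\deg G=2g$ and the two $\mathbb{G}_m$-scalings bounds its number of roots by $9(4g-1)$; each resulting $(\sigma,t)$ determines $r_1$, hence $f=r_1^2+x^q$, hence the marked curve $\mathcal{C}_f$ (the finitely many non-squarefree $f$ being discarded). Taking $S(g,K)$ to be the set of all marked curves obtained this way over the $\binom{2g}{g}^2$ choices of $(S,U)$, we get $\#(S(g,K))\le 9(4g-1)\binom{2g}{g}^2$, and by construction every marked curve with $\#(\mathcal{C}^{*}_{q}(K))>4$ is isomorphic to a member of $S(g,K)$.

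The main obstacle is exactly the proviso ``the eliminant is not identically zero'': one must exclude, for every pair $(S,U)$, a one-dimensional component of the $(\sigma,t)$-system, since such a component would give infinitely many non-isomorphic marked curves carrying six points of order $q$, and nothing cited forbids this a priori. This seems to require the precise arithmetic of the roots $\gamma_i=1/(1-\zeta)$ (over the $q$-th roots of unity $\zeta\ne1$) and of their dilates $t\gamma_i$, together with the combinatorial constraint on which subsets $S$ and $U$ can be compatible, plus a careful bound on the degree of the eliminant to reach $9(4g-1)$. The preceding steps — the divisor criterion, the $\iota$-pairing, the reduction to three $x$-coordinates, the normalisation, and the $\binom{2g}{g}$-fold parametrisation of each pair — are routine given the cited results.
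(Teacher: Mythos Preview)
Your framework coincides with the paper's: normalize three $x$-coordinates, factor $r_1\pm r_j$ to parametrize by a pair of $g$-element subsets, and argue that for each fixed pair the remaining parameters lie in a finite set. But the gap you yourself flag---nonvanishing of the eliminant---is exactly the substance of the paper's proof, and you have not filled it. The paper does not compare low-order coefficients; instead it evaluates the polynomial identity at four points $\eta_1,\eta_2$ (roots of $\Eta_I$) and $\eta_3,\eta_4$ (roots of $\Eta_{\complement I}$), so that one term on the $I$-side drops out each time. Multiplying suitable pairs eliminates $\lambda_1$, and a second elimination via the bracket $[g,h]_{u,v}:=g(u)h(v)-g(v)h(u)$ removes $\lambda_2$, producing the explicit relation
\[
[\Eta_{\complement I},\Psi_{J,b}]_{\eta_1,\eta_2}\,[\Eta_I,\Psi_{\complement J,b}]_{\eta_3,\eta_4}
= b\,[\Psi_{J,b},\Psi_{\complement J,b}]_{\eta_1,\eta_2}\,[\Psi_{J,b},\Psi_{\complement J,b}]_{\eta_3,\eta_4}.
\]
The key lemma then shows each bracket on the right has degree exactly $2g-1$ in $b$: its leading coefficient vanishes iff $\sum_{\eps\in J}(\eps-1)=\sum_{\eps\in\complement J}(\eps-1)$, i.e.\ iff $\sum_{\eps\in J}(\eps-1)=-(2g+1)/2$. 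In characteristic $0$ this fails because the left side is an algebraic integer while the right is not; under hypothesis~(b) one observes that $s_J(t)=\tfrac12+\sum_{\eps\in J}t^{\log_\gamma\eps}$ and $s_{\complement J}(t)$ would both be polynomials over $\mF_p$ of degree at most $2g$ vanishing at a primitive $q$th root $\gamma$, hence both divisible by its degree-$2g$ minimal polynomial, forcing $1/\gamma$ into both $J$ and $\complement J$. This arithmetic step is precisely where hypotheses (a)--(b) are used, and your sketch does not reach it.

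Two smaller problems. Your reduction to ``two equations in $(\sigma,t)$ only'' is not right as written: the coefficients of $x^0,x^1$ in $\alpha G_S+\beta G_{S^c}$ are linear in $(\alpha,\beta)$ but not symmetric, so after imposing $\alpha+\beta=\sigma$, $\alpha\beta=-q$ they still involve $\alpha-\beta$ (and similarly $\gamma'-\delta'$), which must also be eliminated. And the constant $9(4g-1)$ is asserted, not derived: in the paper, $4g-1$ is the degree in $b$ of the eliminant above, while $9$ comes from a separate argument that for each fixed $b$ the pair $(\lambda_1,\lambda_2)$ lies on two distinct irreducible plane cubics (the leading-coefficient relation $\lambda_1+\lambda_1^{-1}=\lambda_2-b\lambda_2^{-1}$ together with one further evaluation), so B\'ezout bounds the intersection by $9$.
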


\begin{Rem}
In the case of $g=2$ the  finiteness assertion of Theorem \ref{finiteness3} was proven in \cite[Prop. 4.4]{BGL01} in all characteristics.
The paper \cite{BGL01} also contains explicit examples of genus 2 curves with 6 points of order 5. It would be interesting to find similar examples for $g>2$ (or to prove that they do not exist).
\end{Rem}

\begin{thm}
\label{primeFields}
Let $g>1$ be an integer and  $K$ be an algebraically closed field.
Suppose that one of the following conditions holds.

\begin{itemize}
\item[(a)]
$\fchar(K)=0$.
\item[(b)]
$q=2g+1$ is a prime, $p:=\fchar(K)$ is an odd prime that is a primitive root modulo $q$.
\end{itemize}

Let $(\mathcal{C},\mathcal{O})$ be a genus $g$ marked hyperelliptic curve over $K$.
Suppose that   $\#(\mathcal{C}_{2g+1}^{*}(K))> 4$. Then

\begin{itemize}
\item[(i)]
 If $\fchar(K)=0$, then $\mathcal{C}$ is defined over a number field.
\item[(ii)]
If $\fchar(K)=p>0$, then $\mathcal{C}$ is defined over a finite field.
\end{itemize}
\end{thm}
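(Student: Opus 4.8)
The plan is to deduce the statement from the fact that the bound in Theorem~\ref{finiteness3} does not depend on $K$. Let $k_0\subseteq K$ be the prime field, so $k_0=\mQ$ in case~(a) and $k_0=\mF_p$ in case~(b), and let $\overline{k_0}$ denote its algebraic closure inside $K$. It suffices to prove that $(\mathcal{C},\mathcal{O})$ is isomorphic over $K$ to the base change of a marked curve defined over $\overline{k_0}$, since such a curve is determined by finitely many elements of $\overline{k_0}$, and these generate a number field when $k_0=\mQ$ and a finite field when $k_0=\mF_p$.

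First I would pass to a rigid normal form. Since $\mathcal{C}$ has a model $y^{2}=h(x)$ with $\mathcal{O}\mapsto\infty$, $h$ monic separable of degree $2g+1$, and $h$ has at least two distinct roots, an affine substitution $x\mapsto\alpha^{2}x+\beta$ followed by a rescaling of $y$ (possible over the algebraically closed $K$) brings $(\mathcal{C},\mathcal{O})$ into the form $y^{2}=f(x)$ with $f$ monic separable of degree $2g+1$ and $f(0)=f(1)=0$; two such \emph{normalized} models give isomorphic marked curves precisely when they differ by one of the finitely many affine maps carrying the root set of one to that of the other, so each marked curve admits at most $(2g+1)(2g)$ normalized models. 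Let $\mathcal{M}_{0}\subseteq\mathbb{A}^{2g-1}_{k_0}$ be the open subscheme parametrizing such $f$, let $\mathcal{Z}\to\mathcal{M}_{0}$ be the universal curve $y^{2}=f(x)$ together with its section $\infty$, let $\mathcal{J}\to\mathcal{M}_{0}$ be its relative Jacobian (an abelian scheme), and regard $\mathcal{Z}$ as a closed subscheme of $\mathcal{J}$ via $P\mapsto[P-\infty]$. Put $\mathcal{Z}_{2g+1}:=\mathcal{Z}\cap\mathcal{J}[2g+1]$, the intersection with the finite étale $(2g+1)$-torsion subscheme; it is finite over $\mathcal{M}_{0}$, and by \cite{ZarhinIzv19} every point of a fiber of $\mathcal{Z}_{2g+1}$ other than $\infty$ has order exactly $2g+1$, since any divisor $d>1$ of $2g+1$ with $d\ne 2g+1$ satisfies $3\le d\le 2g$. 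Finally let $\mathcal{N}$ be the open locus of pairwise-distinct $6$-tuples inside the $6$-fold fiber product of $\mathcal{Z}_{2g+1}\setminus\{\infty\}$ over $\mathcal{M}_{0}$; it is of finite type over $k_0$ and finite over $\mathcal{M}_{0}$ via the projection $\pi$, and for any overfield $L\supseteq k_0$ its $L$-points are exactly the normalized models over $L$ equipped with six distinct $L$-rational points of order $2g+1$.

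The heart of the matter is the claim that $\mathcal{N}$ is finite over $k_0$, equivalently $\dim\mathcal{N}=0$. Here Theorem~\ref{finiteness3} enters, applied with $K_{0}=\overline{k_0}$: this is legitimate because $\overline{k_0}=\overline{\mQ}$ satisfies (a) when $k_0=\mQ$, while $\overline{k_0}=\overline{\mF_p}$ satisfies (b) when $k_0=\mF_p$, as $p$ is then a primitive root modulo the prime $q=2g+1$. It gives only finitely many marked genus $g$ hyperelliptic curves over $\overline{k_0}$ with more than four points of order $2g+1$; hence only finitely many normalized models over $\overline{k_0}$ admit six distinct such points, i.e.\ the constructible set $\pi(\mathcal{N})\subseteq\mathcal{M}_{0}$ has only finitely many $\overline{k_0}$-points, and therefore $\overline{\pi(\mathcal{N})}$ is $0$-dimensional (a positive-dimensional constructible subset of a $k_0$-variety has infinitely many $\overline{k_0}$-points). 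Since $\pi$ has finite fibers this forces $\dim\mathcal{N}=0$, and a finite-type scheme of dimension $0$ over a field is finite over that field. Given now $(\mathcal{C},\mathcal{O})$ over $K$ with $\#(\mathcal{C}_{2g+1}^{*}(K))>4$, hence $\ge 6$ as this cardinality is even, a normalized model of $(\mathcal{C},\mathcal{O})$ together with a choice of six distinct points of order $2g+1$ defines a $K$-point of $\mathcal{N}$; its image $\xi\in\mathcal{N}$ has residue field $\kappa$ finite over $k_0$, hence a number field in case~(a) and a finite field in case~(b), and the fiber of the universal curve over $\xi$ is a marked hyperelliptic curve over $\kappa$ whose base change to $K$ is $(\mathcal{C},\mathcal{O})$. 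This is exactly the required conclusion.

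I expect the principal obstacle to be foundational rather than arithmetic: one must check carefully that $(2g+1)$-torsion of points of $\mathcal{C}$ inside its Jacobian behaves well in families — so that $\mathcal{Z}_{2g+1}$ really is a closed subscheme, finite over $\mathcal{M}_{0}$, with the exclusion of intermediate orders supplied by \cite{ZarhinIzv19} — and that the finite-to-one correspondence between normalized models and isomorphism classes of marked curves is uniform enough to transport the finiteness of Theorem~\ref{finiteness3} into a statement about $\overline{k_0}$-points of $\pi(\mathcal{N})$. Once this set-up is in place, the one genuinely new ingredient is merely the observation that the bound in Theorem~\ref{finiteness3} is independent of $K$, so that finiteness propagates from $\overline{\mQ}$ (resp.\ $\overline{\mF_p}$) to an arbitrary algebraically closed field $K$; no further arithmetic difficulty arises.
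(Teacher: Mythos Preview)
Your argument is correct, but it proceeds quite differently from the paper's own proof. The paper proves Theorems~\ref{finiteness3} and~\ref{primeFields} in a single computation: after normalizing so that the three pairs of order $2g+1$ points have $x$-coordinates $0,-1,b$, it derives an explicit polynomial identity \eqref{main} in parameters $\lambda_1,\lambda_2,b$ over $K_0=\overline{k_0}$, shows (Proposition~\ref{prop2}) that $b$ satisfies a nonzero degree $4g-1$ equation with coefficients in $K_0$, hence $b\in K_0$, and then (Lemma~\ref{lem}(ii), via the Nullstellensatz) that $\lambda_1,\lambda_2\in K_0$ as well. Thus the defining polynomial of $\mathcal{C}$ already has coefficients in $K_0$, and Theorem~\ref{primeFields} drops out as a byproduct of the same equations that give the finiteness.

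Your route instead treats Theorem~\ref{finiteness3} as a black box, noting that its bound is uniform in $K$, and then runs a spreading-out/moduli argument: the locus $\mathcal{N}$ of normalized models with six marked torsion points is of finite type over $k_0$, and finiteness of $\mathcal{N}(\overline{k_0})$ forces $\dim\mathcal{N}=0$, so every $K$-point has algebraic residue field. This is a clean and more conceptual packaging; it would apply verbatim to any finiteness theorem with a bound independent of the algebraically closed field. The paper's approach, by contrast, is more hands-on but yields Theorem~\ref{primeFields} for free with no additional moduli machinery, and makes transparent exactly which equations pin down $b,\lambda_1,\lambda_2$. Your foundational caveats (finiteness of $\mathcal{Z}_{2g+1}\to\mathcal{M}_0$ when $\fchar(K)\nmid 2g+1$, the exclusion of intermediate orders via \cite{ZarhinIzv19}, and the finite-to-one correspondence between normalized models and isomorphism classes) are all routine in the stated characteristics, so there is no genuine gap.
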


\begin{Rem}
Let $q=2g+1$ be a prime. It follows from Dirichlet's theorem about primes in arithmetic progression that the density  of primes $p$  enjoying property (b) of Theorem \ref{primeFields} is positive.

\end{Rem}

The paper is organized as follows. Section \ref{sec2} contains auxiliary results about certain polynomials related to  cyclotomic polynomials.
In Section \ref{sec3} we remind results from \cite{BekkerZarhin20} about hyperelliptic curves with two pairs of torsion points of order $2g+1$.
We use them in Section \ref{sec4} in order to  prove Theorems \ref{finiteness3} and \ref{primeFields}.


\section{Preliminaries}
\label{sec2}
Let $K_0$ be the algebraic closure of the prime subfield of $K$ in $K$. So, $K_0$ is an algebraic closure of the field $\QQ$ of rational numbers if
$\fchar(K)=0$ and $K_0$ is an algebraic closure of the prime finite field $\mF_p$ if $\fchar(K)=p>0$. In other words, $K_0$ is the smallest algebraically closed subfield of $K$.
Let $g>1$ be an integer such that $\fchar(K)\nmid (2g+1)$.  Let us consider the $2g$-element set
$$M(2g+1):=\{\eps \in K, \eps^{2g+1}=1, \eps\ne 1 \} \subset K^{*}.$$
We write
 $$\eta(\eps)= \frac1{\eps-1}  \ \forall \eps \in M(2g+1).$$
 Notice that the degree $2g$ polynomial
$$(x+1)^{2g+1}-x^{2g+1}=(2g+1)\prod_{\eps\in M(2g+1)}(x-\eta(\eps))\in K_0[x]\subset K[x]$$
has no multiple roots. Similarly,  if $b \in K^{*}$, then the degree $2g$ polynomial
$$x^{2g+1}-(x-b)^{2g+1}=(2g+1)b \prod_{\eps\in M(2g+1)}(x+b\eta(\eps))\in K_0[x]\subset K[x]$$
 has no multiple roots as well.  We have in the polynomial ring $K[x]$
 $$\sum_{i=0}^{2g} x^i=\frac{x^{2g+1}-1}{x-1}=\prod_{\eps\in M(2g+1)}(x-\eps).$$
 This implies that
 \begin{equation}
 \label{Vieta}
\sum_{\eps\in M(2g+1)}\eps=-1.
 \end{equation}

 If $I,J$  are subsets of $M(2g+1)$, then we write
 $$\Eta_I(x):=\prod_{\eps\in I}(x-\eta(\eps))\in K_0[x]\subset K[x]$$
and
 $$\Psi_{J,b}(x):= \prod_{\eps\in J}(x+b\eta(\eps))\in K_0[x]\subset K[x].$$
 We write ${\complement I}$
  for the complement of $I$ and ${\complement J}$ for the complement of $J$  in $M(2g+1)$.  Clearly, $\Eta_I(x)$ and $\Psi_{J,b}(x)$ are monic polynomials of degree $\#(I)$ and $\#(J)$ respectively. In addition,
 $$(x+1)^{2g+1}-x^{2g+1}=(2g+1)\Eta_I(x) \Eta_{\complement I}(x),$$
 $$x^{2g+1}-(x-b)^{2g+1}=(2g+1)b \Psi_{J,b}(x) \Psi_{{\complement J},b}(x).$$
 This implies easily the following assertion.

 \begin{prop}
 \label{basic}
 Let $b \in K^{*}$ and
 $$u_1(x),u_2(x), w_1(x), w_2(x) \in K[x]$$
 be degree $g$ polynomials such  that
 $$u_1(x) u_2(x)=(x+1)^{2g+1}-x^{2g+1}, \ w_1(x)w_2(x)=x^{2g+1}-(x-b)^{2g+1}.$$
 Then there exist $g$-element subsets $I,J \subset M(2g+1)$ and $\mu,\nu \in K^{*}$ such  that
 $$u_1(x)=\mu\Eta_I(x), u_2(x)=\frac{2g+1}{\mu}\Eta_{\complement I}(x);$$ $$w_1(x)=\nu\Psi_{J,b}(x), w_2(x)=\frac{(2g+1)b}{\nu} \Psi_{{\complement J},b}(x).$$
 \end{prop}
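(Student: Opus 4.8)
The plan is to read off the statement directly from the two factorizations
$$(x+1)^{2g+1}-x^{2g+1}=(2g+1)\prod_{\eps\in M(2g+1)}(x-\eta(\eps)),\qquad
x^{2g+1}-(x-b)^{2g+1}=(2g+1)b\prod_{\eps\in M(2g+1)}(x+b\eta(\eps))$$
established just above the statement, together with unique factorization in the polynomial ring $K[x]$. First I would record that both right-hand sides are squarefree (this was already noted in the excerpt), so each of the $2g$ linear factors $x-\eta(\eps)$, respectively $x+b\eta(\eps)$, is a nonassociate irreducible (degree one) polynomial, and $K[x]$ is a UFD.

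Next, consider the factorization $u_1(x)u_2(x)=(x+1)^{2g+1}-x^{2g+1}$. Comparing leading coefficients, if $u_1$ has leading coefficient $\mu\in K^*$ then $u_2$ has leading coefficient $(2g+1)/\mu$, since the product has leading coefficient $2g+1$. By unique factorization, the monic polynomial $u_1/\mu$ must be a product of some sub-multiset of the linear factors $\{x-\eta(\eps):\eps\in M(2g+1)\}$; since these factors are distinct, this sub-multiset is actually a subset $I\subseteq M(2g+1)$, and because $\deg u_1=g$ we get $\#(I)=g$. Thus $u_1(x)=\mu\,\Eta_I(x)$. The complementary factor then satisfies $u_2(x)=\frac{(2g+1)}{\mu}\prod_{\eps\notin I}(x-\eta(\eps))=\frac{2g+1}{\mu}\Eta_{\complement I}(x)$, and automatically $\#(\complement I)=2g-g=g$, consistent with $\deg u_2=g$. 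The argument for $w_1,w_2$ is identical, using the second factorization: writing $\nu$ for the leading coefficient of $w_1$, the monic polynomial $w_1/\nu$ is a product of $g$ of the distinct linear factors $x+b\eta(\eps)$, yielding a $g$-element subset $J\subseteq M(2g+1)$ with $w_1(x)=\nu\,\Psi_{J,b}(x)$ and $w_2(x)=\frac{(2g+1)b}{\nu}\Psi_{\complement J,b}(x)$.

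There is essentially no obstacle here; the only point requiring a word of care is that the hypothesis $\fchar(K)\nmid(2g+1)$ (part of the standing assumptions in this section) is what guarantees $2g+1\neq 0$ in $K$, so that the right-hand sides genuinely have degree $2g$ and nonzero leading coefficient, and likewise $b\in K^*$ ensures $(2g+1)b\neq 0$; without these the leading-coefficient bookkeeping would break down. Everything else is a formal consequence of unique factorization together with the squarefreeness of the two polynomials, which forces the chosen divisors to correspond to honest subsets rather than multisets of $M(2g+1)$.
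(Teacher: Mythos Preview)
Your argument is correct and is exactly what the paper intends: the proposition is stated immediately after the two squarefree factorizations with the remark ``This implies easily the following assertion,'' and your use of unique factorization in $K[x]$ together with the leading-coefficient bookkeeping is precisely the implied proof. The only minor comment is that the standing hypothesis $\fchar(K)\nmid(2g+1)$ you invoke is indeed declared at the start of Section~\ref{sec2}, so there is nothing further to justify.
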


 \begin{cor}
 \label{old}
 Let $b$  be an element of $K$ such that $b \ne 0, -1$ and
 $v_1(x)$, $v_2(x)$, $v_3(x) \in K[x]$ be polynomials such that all $\deg (v_i) \le g$ and
 $$x^{2g+1}+v_1^2(x)=(x-b)^{2g+1}+v_2^2(x)=(x+1)^{2g+1}+v_3^2(x).$$
 Then there exist $g$-element subsets $I,J \subset M(2g+1)$ and $\mu,\nu \in K^{*}$ such  that
 $$v_1(x)=\frac{\mu}2\Eta_I(x)+\frac{2g+1}{2\mu}\Eta_{\complement I}(x)=
\frac{\nu}2\Psi_{J,b}(x)-\frac{(2g+1)b}{2\nu}\Psi_{\complement J,b}(x).$$
 \end{cor}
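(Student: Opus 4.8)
The plan is to reduce the three-term chain of equalities in the hypothesis to two polynomial factorization identities and then apply Proposition~\ref{basic}. First I would rewrite the hypothesis: from $x^{2g+1}+v_1^2=(x+1)^{2g+1}+v_3^2$ one obtains
$$(v_1-v_3)(v_1+v_3)=v_1^2-v_3^2=(x+1)^{2g+1}-x^{2g+1},$$
and from $x^{2g+1}+v_1^2=(x-b)^{2g+1}+v_2^2$ one obtains
$$(v_2+v_1)(v_2-v_1)=v_2^2-v_1^2=x^{2g+1}-(x-b)^{2g+1}.$$
Set $u_1:=v_1-v_3$, $u_2:=v_1+v_3$, $w_1:=v_2+v_1$, $w_2:=v_2-v_1$; then $u_1u_2=(x+1)^{2g+1}-x^{2g+1}$, $w_1w_2=x^{2g+1}-(x-b)^{2g+1}$, and $v_1=\tfrac12(u_1+u_2)=\tfrac12(w_1-w_2)$.

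The only point requiring a short argument is that $u_1,u_2,w_1,w_2$ all have degree exactly $g$, as Proposition~\ref{basic} demands. Since $\fchar(K)\nmid(2g+1)$ and $b\in K^{*}$, both $(x+1)^{2g+1}-x^{2g+1}$ and $x^{2g+1}-(x-b)^{2g+1}$ are nonzero of degree $2g$, so none of $u_1,u_2,w_1,w_2$ vanishes; and since each of them is a sum or difference of two polynomials of degree $\le g$, each has degree $\le g$. Combined with $\deg u_1+\deg u_2=2g=\deg w_1+\deg w_2$, this forces all four degrees to equal $g$.

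Finally I would apply Proposition~\ref{basic} to $u_1,u_2,w_1,w_2$: it produces $g$-element subsets $I,J\subset M(2g+1)$ and $\mu,\nu\in K^{*}$ with $u_1=\mu\,\Eta_I(x)$, $u_2=\tfrac{2g+1}{\mu}\,\Eta_{\complement I}(x)$, $w_1=\nu\,\Psi_{J,b}(x)$, $w_2=\tfrac{(2g+1)b}{\nu}\,\Psi_{\complement J,b}(x)$. Substituting these into $v_1=\tfrac12(u_1+u_2)$ and $v_1=\tfrac12(w_1-w_2)$ yields precisely the two expressions for $v_1(x)$ asserted in the statement. I do not expect any genuine obstacle here: the corollary is essentially a repackaging of Proposition~\ref{basic}, and the only thing to watch is the degree bookkeeping in the middle step.
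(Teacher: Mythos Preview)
Your proposal is correct and follows essentially the same approach as the paper's proof: factor the two differences of squares, check via the degree bookkeeping that all four factors have degree exactly $g$, apply Proposition~\ref{basic}, and recover $v_1$ as the half-sum and half-difference. The only cosmetic difference is that your $u_1,u_2$ are the paper's $u_2,u_1$ (you took $u_1=v_1-v_3$ rather than $v_1+v_3$), which is immaterial since Proposition~\ref{basic} is symmetric in the two factors.
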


 \begin{proof}
 We have
 $$\begin{aligned}(x+1)^{2g+1}-x^{2g+1}=v_1^2(x)-v_3^2(x)\\=\left(v_1(x)+v_3(x)\right)\left(v_1(x)-v_3(x)\right)=u_1(x)u_2(x),\end{aligned}$$
 where
 $$u_1(x):=v_1(x)+v_3(x), \ u_2(x):=v_1(x)-v_3(x),$$
 and
 $$\begin{aligned}x^{2g+1}-(x-b)^{2g+1}=v_2^2(x)-v_1^2(x)\\=\left(v_1(x)+v_2(x)\right)\left(v_1(x)-v_2(x)\right)=w_1(x)w_2(x),\end{aligned}$$
 where
 $$w_1(x):=v_1(x)+v_2(x), \ w_2(x):=v_2(x)-v_1(x).$$
 Clearly,
 $$\deg(u_1)\le g,\ \deg(u_2) \le g,\  \deg(u_1)+\deg(u_2)=\deg(u_1 u_2)=2g,$$
 $$\deg(w_1)\le g,\  \deg(w_2) \le g, \ \deg(w_1)+\deg(w_2)=\deg(w_1 w_2)=2g.$$
 This implies that
 $\deg(u_i)=g, \deg(w_i)=g$ for $i=1,2$.

 Now the desired result follows from Proposition \ref{basic} combined with obvious formulas
 $$v_1(x)=\frac{u_1(x)+u_2(x)}{2}=\frac{w_1(x)-w_2(x)}{2}.$$

 \end{proof}

\section{Hyperelliptic curves}
\label{sec3}

Throughout this section we assume that  $\fchar(K)$ does {\sl not} divide  $2(2g+1)$.
Let $f(x) \in K[x]$ be a degree $2g+1$ polynomial without repeated roots. One may attach to $f(x)$ the  marked genus $g$ odd degree hyperelliptic curve $\mathcal{C}_f$ over $K$ that is
the smooth projective model of
$\mathcal{C}_f: y^2=f(x)$ of the smooth plane affine curve
$y^2=f(x)$  with precisely one  ``infinite'' point, which is denoted  $\infty$.  We have
$$\mathcal{C}_f(K)=\{(a,b)\in K^2\mid f(a)=b\} \sqcup \{\infty\}.$$
The point $\infty$ is a Weierstrass point of $\mathcal{C}_f$, so the pair $(\mathcal{C}_f,\infty)$
is  a marked genus $g$ odd degree hyperelliptic curve over $K$.

It is well known  \cite{Lock} that  if $(\mathcal{C},\mathcal{O})$  is  a marked genus $g$ odd degree hyperelliptic curve over $K$, then there exists a degree $2g+1$ polynomial $f(x)\in K[x]$ without repeated roots such that $(\mathcal{C},\mathcal{O})$  is biregularly isomorphic to $(\mathcal{C}_f,\infty)$ over $K$.

Let $(\mathcal{C},\mathcal{O})$ be a marked genus $g$ odd degree hyperelliptic curve over $K$
such that  $\mathcal{C}(K)$ contains three pairs $(P_i, \iota(P_i))$ ($i=1,2,3)$ of torsion points of order $2g+1$.
Without loss of generality  \cite{Lock}  we may assume that there exists a monic separable degree $2g+1$ polynomial $f(x)\in K[x]$
such that $(\mathcal{C},\infty)=(\mathcal{C}_f,\infty)$ and
$$x(P_1)=0,\ x(P_2)=-1,\ x(P_3)=b,$$
where
$$b \in K, \ b \ne 0,-1.$$

It follows from \cite[Th.1]{BekkerZarhin20} that there  exist
polynomials  $v_1(x), v_2(x),v_3(x)\in K[x]$, whose degrees do not exceed   $g$, such that
$v_1(0)\neq0, v_2(b)\neq0, v_3(-1)\neq0$ and
\begin{equation}
\label{v1v2v3}
f(x)=x^{2g+1}+v_1^2(x)=(x-b)^{2g+1}+v_2^2(x)=(x+1)^{2g+1}+v_3^2(x).
\end{equation}
Combining \eqref{v1v2v3}  with Corollary \ref{old}, we obtain that  there exist $g$-element sets $I,J \subset M(2g+1)$ and $\mu,\nu \in K^{*}$ such  that
 $$v_1(x)=\frac{\mu}2\Eta_I(x)+\frac{2g+1}{2\mu}\Eta_{\complement I}(x)=
\frac{\nu}2\Psi_{J,b}(x)-\frac{(2g+1)b}{2\nu}\Psi_{\complement J,b}(x).$$

For each  $g$-element subset $I$ of $M(2g+1)$ and $\mu\in K^{\ast}$ consider the marked hyperelliptic curve $(\mathcal C_{I,\mu},\infty)$, where
$$\mathcal C_{I,\mu}: y^2=x^{2g+1}+ \left(\frac{\mu}2\Eta_I(x)+\frac{2g+1}{2\mu}\Eta_{\complement I}(x)\right)^2.$$
We have just proven that if $\mathcal C_{I,\mu}$ contains three pairs $(P_i, \iota(P_i))$ ($i=1,2,3)$ of torsion points of order $2g+1$, then
there exist a $g$-element subset $J\subset M(2g+1)$ and $\nu\in K^{\ast}$ such that
\begin{equation}\label{mu}\frac{\mu}2\Eta_I(x)+\frac{2g+1}{2\mu}\Eta_{\complement I}(x)=
\frac{\nu}2\Psi_{J,b}(x)-\frac{(2g+1)b}{2\nu}\Psi_{\complement J,b}(x).\end{equation}

\section{Proof of Theorems \ref{finiteness3} and \ref{primeFields}}
\label{sec4}

Let $S(g,K)$ be the set of the marked genus $g$ hyperelliptic curves $(\mathcal C_{I,\mu},\infty)$ over $K$ for each of which  there exist a $g$-element subset $J\subset M(2g+1)$ and $\nu\in K^{\ast}$ such that equality \eqref{mu} holds. It follows from the previous section that every  marked genus $g$ odd degree hyperelliptic curve $(\mathcal{C},\mathcal{O})$ over $K$
such that  $\mathcal{C}(K)$ contains three pairs $(P_i, \iota(P_i))$ ($i=1,2,3)$ of torsion points of order $2g+1$ is   $K$-biregularly isomorphic to a curve from $S(g,K)$. To complete the proof of Theorem \ref{finiteness3} it is sufficient to show that $S(g,K)$ is finite and $\#(S(g,K)) \le 9 (4g-1) \binom{2g}{g}^2$.

Dividing both sides of  equality \eqref{mu} by
 ${\sqrt{2g+1}}/{2}$ and introducing the notation
 $$\frac{\mu}{\sqrt{2g+1}}=\lambda_1,  \
  \frac{\nu}{\sqrt{2g+1}}=\la_2,$$ we get
  \beq\label{main}\lambda_1\Eta_I(x)+\frac{1}{\lambda_1}\Eta_{\complement I}(x)=\lambda_2
\Psi_{J,b}(x)-\frac{b}{\lambda_2}\Psi_{\complement J,b}(x).\eeq

We will prove that the latter equality may hold only for a finite number of
$\lambda_1, \lambda_2$, and $b$. Since there are only finitely many pairs of subsets $I,J$, it is sufficient to prove that for fixed $I,J$ there are finitely many $\lambda_1, \lambda_2,b$ for which \eqref{main} holds.

We divide the proof in two steps. First we prove that for fixed $b$ there are only finitely many pairs $\lambda_1,\lambda_2$
such that equality \eqref{main} is valid.  In addition, we check that if $b\in K_0$, then all such
 $\lambda_1,\lambda_2$ also lie in $K_0$.
Then we prove that  $b$  satisfies an equation whose coefficients lie in $K_0$ and do not depend on
$\lambda_1$ and $\lambda_2$.  More precisely, we  check that each such $b$ is a root of a certain degree $4g-1$ polynomial with coefficients in $K_0$. Since $K_0$ is algebraically closed, we get $b \in K_0$, which proves Theorem \ref{primeFields}.

\subsection*{Step 1}
 We need the following  result.
 \begin{lem}\label{lem}
Let $\alpha,\beta,\gamma,\delta, b\in K$ and $b\neq 0,$ $b\neq-1$.

\begin{itemize}
\item[(i)]
The system of equations
\begin{equation}\label{2}\begin{cases}
\la_1+\frac1{\la_1}=\la_2-\frac b{\la_2},\\
\alpha\la_1+\frac\beta{\la_1}=\gamma\la_2-\frac {\delta}{\la_2}
\end{cases}
\end{equation}
has finitely many solutions in $K$  if and only if not all equalities
 $\alpha=\beta=\gamma=\delta/b$ hold.
 \item[(ii)]
 If   $\alpha,\beta,\gamma,\delta, b\in K_0$ and $b\neq 0,$ $b\neq-1$ and the system \eqref{2}
 has only finitely many solutions in $K$, then all these solutions actually lie in $K_0$.

\item[(iii)]      If the number of solutions of system \eqref{2} is finite, then it does not exceed 9. \end{itemize}
 \end{lem}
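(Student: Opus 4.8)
The plan is to treat the system \eqref{2} as two equations in the two unknowns $\la_1,\la_2 \in K^{*}$ and eliminate one variable to reduce to a single polynomial equation in (say) $\la_2$. First I would rewrite the first equation of \eqref{2} by clearing denominators: $\la_1^2\la_2 + \la_2 = \la_1\la_2^2 - b\la_1$, i.e.
\[
\la_1^2\la_2 - \la_1(\la_2^2 - b) + \la_2 = 0,
\]
a quadratic in $\la_1$ whose coefficients are polynomials in $\la_2$ with coefficients in the subfield generated by $b$ (hence in $K_0$ when $b\in K_0$). Solving for $\la_1$ expresses $\la_1$ as a root of this quadratic; substituting into the second equation of \eqref{2} (again after clearing denominators, so it becomes $\al\la_1^2\la_2 + \be\la_2 = \ga\la_1\la_2^2 - \del\la_1$, i.e. $\al\la_1^2\la_2 - \la_1(\ga\la_2^2 - \del) + \be\la_2 = 0$, another quadratic in $\la_1$) and taking the resultant of the two quadratics in $\la_1$ yields a polynomial $R(\la_2)\in K_0[\la_2]$ (when the data lie in $K_0$) that every solution $\la_2$ must satisfy. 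The resultant of two quadratics has degree at most $4$ in the remaining parameters, so after bounding degrees carefully one gets $\deg R \le 4$ in $\la_2$; pairing each such $\la_2$ with the (at most two) corresponding $\la_1$, one would like to land at the bound $9$, which suggests that in fact the relevant elimination produces a cubic (three values of $\la_2$, each with up to two $\la_1$, minus overlaps, plus possibly one extra solution) — I would track the degree bookkeeping precisely to hit exactly $9$. This settles (iii), and it settles (ii) as well: a polynomial of positive degree over the algebraically closed field $K_0$ has all its roots in $K_0$, so $\la_2\in K_0$, and then $\la_1$, being a root of a quadratic over $K_0[\la_2]\subset K_0$, also lies in $K_0$.

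For part (i), the content is to characterize exactly when the elimination degenerates, i.e. when the resultant $R(\la_2)$ is identically zero (equivalently, when the two quadratics in $\la_1$ share a common factor for every $\la_2$, giving a one-parameter family of solutions). I would argue as follows. If $\al=\be=\ga=\del/b$, call this common value $c$; then the second equation of \eqref{2} is obtained from the first simply by multiplying both sides by $c$, so it is automatically satisfied whenever the first is, and the first equation alone — a single equation in two unknowns — has infinitely many solutions over the algebraically closed (hence infinite) field $K$ (for instance, for all but finitely many $\la_2\in K^{*}$ the quadratic in $\la_1$ above has a nonzero root). Conversely, suppose not all of $\al=\be=\ga=\del/b$ hold. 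The honest way to see finiteness is to observe that the two equations define two affine plane curves in the $(\la_1,\la_2)$-plane, each irreducible of low degree (or a union of a few lines), and to show they share no common component. Concretely, from the two quadratics in $\la_1$,
\[
\la_1^2\la_2 - \la_1(\la_2^2 - b) + \la_2 = 0, \qquad \al\la_1^2\la_2 - \la_1(\ga\la_2^2 - \del) + \be\la_2 = 0,
\]
forming $\al\cdot(\text{first}) - (\text{second})$ eliminates the $\la_1^2$ term and gives
\[
\la_1\bigl((\ga-\al)\la_2^2 + (\al b - \del)\bigr) + (\al - \be)\la_2 = 0,
\]
which is linear in $\la_1$; unless the coefficient $(\ga-\al)\la_2^2 + (\al b-\del)$ and the constant $(\al-\be)\la_2$ both vanish identically in $\la_2$ — which forces $\al=\be$, $\al=\ga$, and $\del=\al b$, i.e. precisely the excluded case — one can solve $\la_1 = -\dfrac{(\al-\be)\la_2}{(\ga-\al)\la_2^2 + (\al b - \del)}$ as a rational function of $\la_2$, substitute back into the first equation, clear denominators, and obtain a nonzero polynomial equation in $\la_2$ alone. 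Its roots are finite in number, and each determines $\la_1$ uniquely by the displayed formula (with the degenerate sub-case $\ga=\al$, $\al b=\del$, $\al\ne\be$ handled separately: then $\la_2=0$ is forced, which is excluded, so there are no solutions at all). This proves (i), and the explicit rational parametrization of $\la_1$ in terms of $\la_2$ over $K_0$ re-proves (ii) in this branch.

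The main obstacle I anticipate is twofold: first, organizing the several degenerate sub-cases in part (i) (vanishing of individual coefficients of the linear-in-$\la_1$ relation) cleanly, so that the dichotomy "infinitely many solutions $\iff$ $\al=\be=\ga=\del/b$" comes out sharp rather than leaving a grey zone; and second, getting the numerical bound in (iii) to be exactly $9$ rather than merely "finite" — this requires a careful degree count in the elimination, keeping track of the fact that $\la_1,\la_2$ range over $K^{*}$ (so spurious roots at $0$ must be discarded) and that the first equation already cuts the ambient $2$-dimensional parameter space down to a curve of controlled degree, against which the second equation is cut. I would do the degree bookkeeping by viewing \eqref{main}/\eqref{2} projectively or by a direct resultant computation and checking the bound is attained generically.
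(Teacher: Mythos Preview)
Your approach to (i) and (ii) is correct and complete, but it is genuinely different from the paper's.  The paper argues geometrically: after clearing denominators, each equation of \eqref{2} defines a plane cubic in $(\la_1,\la_2)$; the first cubic $\la_1^2\la_2-\la_1\la_2^2+b\la_1+\la_2$ is irreducible because its projective closure is smooth when $b\neq 0,-1$, so if the two cubics share infinitely many points they share a component, forcing the second cubic to be a scalar multiple of the first, i.e.\ $\alpha=\beta=\gamma=\delta/b$.  Part (ii) is then a one-line appeal to the Nullstellensatz over $K_0$.  Your route---eliminating the $\la_1^2$ term to get the linear relation $\la_1\bigl((\ga-\al)\la_2^2 + (\al b - \del)\bigr) + (\al - \be)\la_2 = 0$ and reading off the degeneration locus---is more elementary and avoids invoking smoothness of the cubic; it also makes (ii) transparent without the Nullstellensatz.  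Both are fine; the paper's is shorter, yours is more explicit.

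For (iii) there is a real gap, and it is exactly the one you flag.  The paper obtains the bound $9$ in one stroke from B\'ezout's theorem: two plane cubics with no common component meet in at most $3\cdot 3=9$ points.  Your elimination strategy does not naturally produce the number $9$; if you compute the resultant of your two quadratics in $\la_1$ you get a polynomial in $\la_2$ of degree at most $6$ (in fact divisible by $\la_2^2$), and your linear relation pins $\la_1$ down uniquely for generic $\la_2$, which points toward a bound strictly smaller than $9$.  So the ``careful degree bookkeeping'' you propose would, if carried out, yield \emph{some} finite bound---possibly sharper than $9$---but not the specific number $9$ the lemma states.  If you want the stated bound with minimal effort, simply observe that your two cleared-denominator equations are cubics and invoke B\'ezout; if you want a sharper bound, finish the resultant computation, but then you are proving a different (stronger) statement.
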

  \begin{proof}[Proof of Lemma \ref{lem}]
  Notice that the first equation of the system defines an open dense subset of affine curve
 $\la_1^2\la_2-\la_1\la_2^2+b\la_1+\la_2=0$
 whose projective closure is a nonsingular cubic if
 $b\neq0$, $b\neq-1$. Hence the cubic polynomial $\la_1^2\la_2-\la_1\la_2^2+b\la_1+\la_2$ is irreducible.
 \footnote{Another way to see that this polynomial is irreducible is to observe that the  polynomial
 $\la_1^2\la_2+(b-\la_2^2)\la_1+\la_2$ in $\la_1$ does not vanish for any value of $\la_2\in K$ since $b\neq0$. So, it is irreducible in $K[\la_1,\la_2]$ if and only if it is irreducible in $K(\la_2)[\la_1]$, i.e., if its discriminant $(b-\la_2)^2-4\la_2^2=\la_2^4-(2b+4)\la_2^2+b^2$ is not a square in $K(\la_2)$, which is the case since $(2b+4)^2-4b^2=4b+4\neq0$ due to $b\neq-1$.} The second equation of the system defines an open dense subset of affine cubic curve
   $\alpha\la_1^2\la_2-\gamma\la_1\la_2^2+\delta\la_1+\beta\la_2=0$. This implies that if \eqref{2} has infinitely many solutions, then the (coefficients of the) polynomials
   $\la_1^2\la_2-\la_1\la_2^2+b\la_1+\la_2$ and
$\alpha\la_1^2\la_2-\gamma\la_1\la_2^2+\delta\la_1+\beta\la_2$ should be proportional, which proves (i).
Assertion (ii) follows from the Hilbert's Nullstellensatz applied to the maximal ideals of the ring
$$K_0[\Lambda_1,\Lambda_2]/(\Lambda_1^2\Lambda_2-\Lambda_1\Lambda_2^2+b\Lambda_1+\Lambda_2, \alpha\Lambda_1^2\Lambda_2-\gamma\Lambda_1\Lambda_2^2+\delta\Lambda_1+\beta\Lambda_2)$$
where $\Lambda_1,\Lambda_2$ are independent variables over $K_0$. Assertion (iii) follows from Bezout's theorem
applied to our plane cubics.
\end{proof}

 Let us revisit equation \eqref{main}.  Comparing the leading coefficients of the polynomials on both sides of the equation,
 we get the equality
 $$\la_1+\frac1{\la_1}=\la_2-\frac b{\la_2}.$$ Recall that for each
 $I$ the polynomials $\Eta_I(x)$ and $\Eta_{\complement I}(x)$ that appear in the left-hand side of  \eqref{main} are distinct.  Therefore there exists
  $x_0\in K_0$ such that $\Eta_I(x_0)\neq\Eta_{\complement I}(x_0)$. Applying Lemma  \ref{lem} with
 $\alpha=\Eta_I(x_0)$,    $\beta=\Eta_{\complement I}(x_0)\in K_0$, $\gamma=\Psi_{J,b}(x_0)\in K_0$ and $\delta=b\Psi_{\complement J,b}(x_0)\in K_0$,
 we observe that for each pair
  $I$, $J$ and each $b$ there are only finitely many values
   $\lambda_1,\lambda_2$ such that  \eqref{main} holds. In addition, if $b \in K_0$, then all such  $\lambda_1,\lambda_2$ also lie in $K_0$.
 \subsection*{Step 2}
Let us fix $g$-element subsets $I$ and $J$ of $M(2g+1)$ and prove that
if $b$ satisfies \eqref{main}, then it is a root of a certain nonzero polynomial
 whose coefficients lie in $K_0$ and do not depend on
$\lambda_1$ and $\lambda_2$.


Since $g\geq 2$, each of the sets $I$ and $\complement I$  contains at least 2 elements; let  us choose  two {\sl distinct} elements $\eps_1,\eps_2$ of $I$ and   two {\sl distinct} elements $\eps_3,\eps_4$  of the $\complement I$. Let us put
$$\eta_i: =\eta(\eps_i) \in K_0 \ \forall i=1,2,3,4.$$
Clearly, all $\eta_i$ are distinct and
$$\Eta_I(\eta_1)=\Eta_I(\eta_2)=0,\ \Eta_{\complement I}(\eta_1)\ne 0, \Eta_{\complement I}(\eta_2)\ne 0,$$
$$\Eta_{\complement I}(\eta_3)=\Eta_{\complement I}(\eta_4)= 0,\ \Eta_I(\eta_3)\ne 0, \Eta_I(\eta_4)\ne 0.$$

Successively substituting $x=\eta_1,\eta_2,\eta_3,\eta_4$ into \eqref{main}, we get
\beq\label{eta1}\frac{1}{\lambda_1}\Eta_{\complement I}(\eta_1)=\lambda_2
\Psi_{J,b}(\eta_{1})-\frac{b}{\lambda_2}\Psi_{\complement J,b}(\eta_{1}),\eeq
\beq\label{eta2}\frac{1}{\lambda_1}\Eta_{\complement I}(\eta_2)=\lambda_2
\Psi_{J,b}(\eta_{2})-\frac{b}{\lambda_2}\Psi_{  \complement J,b}(\eta_{2}),\eeq
 \beq\label{eta3}{\lambda_1}\Eta_{I}(\eta_3)=\lambda_2
\Psi_{J,b}(\eta_{3})-\frac{b}{\lambda_2}\Psi_{\complement J,b}(\eta_{3}),\eeq
 \beq\label{eta4}{\lambda_1}\Eta_{ I}(\eta_{4})=\lambda_2
\Psi_{J,b}(\eta_{4})-\frac{b}{\lambda_2}\Psi_{\complement J,b}(\eta_{4}).\eeq
 Consequently,
 \beq\label{eq1}\begin{aligned}
 \Eta_{\complement I}(\eta_1)\Eta_{I}(\eta_3)&=(\lambda_2
\Psi_{J,b}(\eta_{1})-\frac{b}{\lambda_2}\Psi_{\complement J,b}(\eta_{1}))(\lambda_2
\Psi_{J,b}(\eta_{3})\\-\frac{b}{\lambda_2}\Psi_{\complement J,b}(\eta_{3}))\\
&=\lambda_2^2\Psi_{J,b}(\eta_{1})\Psi_{J,b}(\eta_{3})+\frac{b^2}{\lambda_2^2}\Psi_{\complement J,b}(\eta_{1})
\Psi_{\complement J,b}(\eta_{3})\\&-(\Psi_{J,b}(\eta_{1})\Psi_{\complement J,b}(\eta_{3})+\Psi_{\complement J,b}(\eta_{1})
\Psi_{J,b}(\eta_{3}))b,
  \end{aligned}
 \eeq
\beq\label{eq2}\begin{aligned}
  \Eta_{\complement I}(\eta_1)\Eta_{I}(\eta_4)&=(\lambda_2
\Psi_{J,b}(\eta_{1})-\frac{b}{\lambda_2}\Psi_{\complement J,b}(\eta_{1}))(\lambda_2
\Psi_{J,b}(\eta_{4})-\frac{b}{\lambda_2}\Psi_{\complement J,b}(\eta_{4}))\\
&=\lambda_2^2\Psi_{J,b}(\eta_{1})\Psi_{J,b}(\eta_{4})+\frac{b^2}{\lambda_2^2}\Psi_{\complement J,b}(\eta_{1})
\Psi_{\complement J,b}(\eta_{4})\\&-(\Psi_{J,b}(\eta_{1})\Psi_{\complement J,b}(\eta_{4})+\Psi_{\complement J,b}(\eta_{1})
\Psi_{J,b}(\eta_{4}))b,
  \end{aligned}
 \eeq
  \beq\label{eq3}\begin{aligned}
 \Eta_{\complement I}(\eta_2)\Eta_{I}(\eta_3)&=(\lambda_2
\Psi_{J,b}(\eta_{2})-\frac{b}{\lambda_2}\Psi_{\complement J,b}(\eta_{2}))(\lambda_2
\Psi_{J,b}(\eta_{3})-\frac{b}{\lambda_2}\Psi_{\complement J,b}(\eta_{3}))\\
&=\lambda_2^2\Psi_{J,b}(\eta_{2})\Psi_{J,b}(\eta_{3})+\frac{b^2}{\lambda_2^2}\Psi_{\complement J,b}(\eta_{2})
\Psi_{\complement J,b}(\eta_{3})\\&-(\Psi_{J,b}(\eta_{2})\Psi_{\complement J,b}(\eta_{3})+\Psi_{\complement J,b}(\eta_{2})
\Psi_{J,b}(\eta_{3}))b,
  \end{aligned}
 \eeq
\beq\label{eq4}\begin{aligned}
  \Eta_{\complement I}(\eta_2)\Eta_{I}(\eta_4)&=(\lambda_2
\Psi_{J,b}(\eta_{2})-\frac{b}{\lambda_2}\Psi_{\complement J,b}(\eta_{2}))(\lambda_2
\Psi_{J,b}(\eta_{4})-\frac{b}{\lambda_2}\Psi_{\complement J,b}(\eta_{4}))\\
&=\lambda_2^2\Psi_{J,b}(\eta_{2})\Psi_{J,b}(\eta_{4})+\frac{b^2}{\lambda_2^2}\Psi_{\complement J,b}(\eta_{2})
\Psi_{\complement J,b}(\eta_{4})\\&-(\Psi_{J,b}(\eta_{2})\Psi_{\complement J,b}(\eta_{4})+\Psi_{\complement J,b}(\eta_{2})
\Psi_{J,b}(\eta_{4}))b.
  \end{aligned}
 \eeq
 Multiplying \eqref{eq1} by $\Psi_{\complement J,b}(\eta_{4})$, \eqref{eq2} by $\Psi_{\complement J,b}(\eta_{3})$, and
 and subtracting the resulting equations, we get
 \beq\label{eq5}\begin{aligned}
  \Eta_{\complement I}(\eta_1)(\Eta_{I}(\eta_3)\Psi_{\complement J,b}(\eta_4)- \Eta_{I}(\eta_4)\Psi_{\complement J,b}(\eta_3))\\=\lambda_2^2\Psi_{J,b}(\eta_1)(\Psi_{J,b}(\eta_3)\Psi_{\complement J,b}(\eta_4)-
  \Psi_{J,b}(\eta_4)\Psi_{\complement J,b}(\eta_3))\\-\Psi_{\complement J,b}(\eta_1)(\Psi_{J,b}(\eta_4)\Psi_{\complement J,b}(\eta_3)-\Psi_{J,b}(\eta_3)\Psi_{\complement J,b}(\eta_4))b.
   \end{aligned}
   \eeq
   To simplify notation, we put $[g,h]_{u,v}:=g(u)h(v)-g(v)h(u)$ for arbitrary polynomials $g,h\in K[x]$. In this notation \eqref{eq5} takes the form
   \beq\label{eq6}
   \Eta_{\complement I}(\eta_1)[\Eta_I,\Psi_{\complement J,b}]_{\eta_3,\eta_4}=\lambda_2^2\Psi_{J,b}(\eta_1)[\Psi_{J,b},\Psi_{\complement J,b}]_{\eta_3,\eta_4}
   -\Psi_{\complement J,b}(\eta_1)[\Psi_{J,b},\Psi_{\complement J,b}]_{\eta_3,\eta_4}b
   \eeq
  Similarly, from equations \eqref{eq3} and \eqref{eq4}, we get
 \beq\label{eq7}
   \Eta_{\complement I}(\eta_2)[\Eta_I,\Psi_{\complement J,b}]_{\eta_3,\eta_4}=\lambda_2^2\Psi_{J,b}(\eta_2)[\Psi_{J,b},\Psi_{\complement J,b}]_{\eta_3,\eta_4}
   -\Psi_{\complement J,b}(\eta_2)[\Psi_{J,b},\Psi_{\complement J,b}]_{\eta_3,\eta_4}b
   \eeq
   Multiplying \eqref{eq6} by $\Psi_{J,b}(\eta_2)$ ,  \eqref{eq7} by  $\Psi_{J,b}(\eta_1)$, and subtracting obtained equations,
   we get
   \beq \label{main2}
   [\Eta_{\complement I}, \Psi_{J,b}]_{\eta_1,\eta_2}[\Eta_I, \Psi_{\complement J,b}]_{\eta_3,\eta_4}=[\Psi_{J,b},\Psi_{\complement J,b}]_{\eta_1,\eta_2}[\Psi_{J,b},\Psi_{\complement J}]_{\eta_3,\eta_4}b.
         \eeq
         Let us find the leading coefficient of $[\Psi_{J,b},\Psi_{\complement J,b}]_{\eta_1,\eta_2}$ as a polynomial in  $b$.

         \begin{lem}\label{l2} Let $\gamma, \delta,\alpha_i,\beta_j\in K$ and $$r(x)=\prod_{i=1}^g(x+b\alpha_i), s(x)=\prod_{j=1}^g(x+b\beta_j)\in K[b][x].$$ Then the coefficient of $[r,s]_{\gamma,\delta}\in K[b]$ at $b^k$ is zero for $k\geq 2g$ and the coefficient at $b^{2g-1}$
         is equal to $$\alpha_1\ldots\alpha_g\beta_1\ldots\beta_g(\gamma-\delta)\left((1/{\alpha_1}+\cdots+1/{\alpha_g})
         -(1/\beta_1+\cdots+1/\beta_g)\right).$$
         \end{lem}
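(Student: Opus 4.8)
The plan is to expand the two products $r(x)$ and $s(x)$ explicitly as polynomials in $x$ whose coefficients are polynomials in $b$, compute $[r,s]_{\gamma,\delta} = r(\gamma)s(\delta) - r(\delta)s(\gamma)$ by distributing, and then track the powers of $b$. First I would write
$$r(x) = \sum_{k=0}^{g} e_k(\un\alpha)\, b^k\, x^{g-k}, \qquad s(x) = \sum_{\ell=0}^{g} e_\ell(\un\beta)\, b^\ell\, x^{g-\ell},$$
where $e_k(\un\alpha)$ denotes the $k$-th elementary symmetric polynomial in $\alpha_1,\ldots,\alpha_g$ (and similarly for $\un\beta$). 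Substituting $x=\gamma$ and $x=\delta$ and multiplying, the bidegree-$(k,\ell)$ term of $r(\gamma)s(\delta)$ is $e_k(\un\alpha)e_\ell(\un\beta)\gamma^{g-k}\delta^{g-\ell}b^{k+\ell}$. So the coefficient of $b^m$ in $[r,s]_{\gamma,\delta}$ is
$$\sum_{k+\ell=m} e_k(\un\alpha)e_\ell(\un\beta)\bigl(\gamma^{g-k}\delta^{g-\ell} - \gamma^{g-\ell}\delta^{g-k}\bigr).$$

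The vanishing for $m \ge 2g$ is immediate, since both $k$ and $\ell$ are at most $g$, so $k+\ell \le 2g$, and for $m = 2g$ the only term has $k=\ell=g$, for which the parenthesized factor $\gamma^0\delta^0 - \gamma^0\delta^0$ vanishes. For $m = 2g-1$ the only contributions come from $(k,\ell) = (g, g-1)$ and $(k,\ell) = (g-1, g)$; these give
$$e_g(\un\alpha)e_{g-1}(\un\beta)(\delta - \gamma) + e_{g-1}(\un\alpha)e_g(\un\beta)(\gamma - \delta) = (\gamma-\delta)\bigl(e_{g-1}(\un\alpha)e_g(\un\beta) - e_g(\un\alpha)e_{g-1}(\un\beta)\bigr).$$
Now $e_g(\un\alpha) = \alpha_1\cdots\alpha_g$ and $e_{g-1}(\un\alpha) = \alpha_1\cdots\alpha_g\sum_i 1/\alpha_i$, and likewise for $\un\beta$; factoring out $\alpha_1\cdots\alpha_g\,\beta_1\cdots\beta_g$ from the bracket yields exactly the claimed expression
$$\alpha_1\cdots\alpha_g\,\beta_1\cdots\beta_g\,(\gamma-\delta)\Bigl(\bigl(\tfrac1{\alpha_1}+\cdots+\tfrac1{\alpha_g}\bigr) - \bigl(\tfrac1{\beta_1}+\cdots+\tfrac1{\beta_g}\bigr)\Bigr).$$

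There is no real obstacle here; the computation is entirely bookkeeping with elementary symmetric functions. The only points that require a moment's care are the indexing convention (writing $x+b\alpha_i$ so that the $b^k$-coefficient involves $x^{g-k}$, not $x^k$) and noticing that the top two potential powers $b^{2g}$ and part of $b^{2g-1}$ are forced to collapse because the symmetric/antisymmetric structure of $[r,s]_{\gamma,\delta}$ kills the diagonal term $k=\ell$. I would present the general coefficient formula first, then simply read off the cases $m=2g$ and $m=2g-1$.
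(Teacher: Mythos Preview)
Your proof is correct and follows the same approach as the paper: direct expansion of the product $r(\gamma)s(\delta)-r(\delta)s(\gamma)$ and inspection of the top-degree coefficients in $b$. The paper's proof merely asserts the vanishing at $b^{2g}$ and the value at $b^{2g-1}$ without writing out the intermediate steps, whereas you organize the computation cleanly via the elementary symmetric polynomials $e_k(\un\alpha)$, $e_\ell(\un\beta)$; this is a welcome fleshing-out of the same argument rather than a different route.
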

         \begin{proof} We have
         $$[r,s]_{\gamma,\delta}=\prod_{i=1}^g(\gamma+b\alpha_i)\prod_{j=1}^g(\delta+b\beta_j)-
         \prod_{j=1}^g(\delta+b\alpha_j)\prod_{i=1}^g(\gamma+b\beta_i).$$
          Obviously,  the coefficients of $[r,s]_{\gamma,\delta}$ at  $b^k$ are zero for all $k>2g$. The coefficient at $b^{2g}$ is also zero. The coefficient of $[r,s]_{\gamma,\delta}$ at $b^{2g-1}$ is
          $$\alpha_1\ldots\alpha_g\beta_1\ldots\beta_g(\gamma-\delta)\left((1/{\alpha_1}+\cdots+1/{\alpha_g})
         -(1/\beta_1+\cdots+1/\beta_g)\right).$$
         \end{proof}
         Now we apply Lemma~\ref{l2} to $[\Psi_{J,b},\Psi_{\complement J,b}]_{\eta_1,\eta_2}$ and $ [\Psi_{J,b},\Psi_{\complement J,b}]_{\eta_3,\eta_4}$.
         At this point we need to use the conditions (a)-(b) of our Theorems imposed on $\fchar(K)$.

         \begin{lem}\label{main3}
         The polynomials     $ [\Psi_{J,b},\Psi_{\complement J,b}]_{\eta_1,\eta_2}$ and $ [\Psi_{J,b},\Psi_{\complement J,b}]_{\eta_3,\eta_4}$ have degree $2g-1$  with respect to $b$.
         \end{lem}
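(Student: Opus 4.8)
By Lemma~\ref{l2} applied to $[\Psi_{J,b},\Psi_{\complement J,b}]_{\eta_1,\eta_2}$, the degree in $b$ is at most $2g-1$, and the coefficient at $b^{2g-1}$ equals
$$\Bigl(\prod_{\eps\in J}\eta(\eps)\Bigr)\Bigl(\prod_{\eps\in \complement J}\eta(\eps)\Bigr)(\eta_1-\eta_2)\Bigl(\sum_{\eps\in J}\tfrac{1}{\eta(\eps)}-\sum_{\eps\in \complement J}\tfrac{1}{\eta(\eps)}\Bigr),$$
and similarly with $(\eta_3-\eta_4)$ for the other bracket. Since $\eta_1\ne\eta_2$ and $\eta_3\ne\eta_4$ and each $\eta(\eps)=1/(\eps-1)\ne 0$, the only way for either polynomial to have degree strictly less than $2g-1$ is that
$$\sum_{\eps\in J}\tfrac{1}{\eta(\eps)}=\sum_{\eps\in \complement J}\tfrac{1}{\eta(\eps)},\qquad\text{i.e.}\qquad \sum_{\eps\in J}(\eps-1)=\sum_{\eps\in \complement J}(\eps-1).$$
Because $\#(J)=\#(\complement J)=g$, the $-1$ terms cancel on each side, so this is equivalent to $\sum_{\eps\in J}\eps=\sum_{\eps\in \complement J}\eps$. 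Thus the whole lemma reduces to showing that \emph{no} $g$-element subset $J\subset M(2g+1)$ can satisfy $\sum_{\eps\in J}\eps=\sum_{\eps\in\complement J}\eps$.

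First I would rewrite this condition additively. By \eqref{Vieta} we have $\sum_{\eps\in M(2g+1)}\eps=-1$, so $\sum_{\eps\in J}\eps=\sum_{\eps\in\complement J}\eps$ is equivalent to $2\sum_{\eps\in J}\eps=-1$, i.e. $\sum_{\eps\in J}\eps=-1/2$. So I must show: no sum of $g$ distinct nontrivial $(2g+1)$-st roots of unity equals $-1/2$. Suppose for contradiction that $J=\{\zeta^{a_1},\dots,\zeta^{a_g}\}$ with $\zeta$ a primitive $(2g+1)$-st root of unity and $1\le a_1<\dots<a_g\le 2g$, and $\sum_j\zeta^{a_j}=-1/2$. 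This is a $\mathbb Z$-linear (indeed $\mathbb Q$-linear) relation among powers of $\zeta$. Let $q=2g+1$.

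The case $q$ prime is the cleanest and covers hypothesis (b) as well as many instances of (a): when $q$ is prime, $1,\zeta,\dots,\zeta^{q-2}$ is a $\mathbb Q$-basis of $\mathbb Q(\zeta)$ and $\zeta^{q-1}=-1-\zeta-\dots-\zeta^{q-2}$. Writing the relation $\sum_j\zeta^{a_j}+\tfrac12=0$ in this basis: if $q-1\notin J$, all exponents $a_j$ lie in $\{1,\dots,q-2\}$, and the constant term forces $\tfrac12=0$, absurd. If $q-1\in J$, substituting $\zeta^{q-1}=-\sum_{i=1}^{q-2}\zeta^i$ turns the relation into $-1+\sum_{a_j\ne q-1}\zeta^{a_j}-\sum_{i=1}^{q-2}\zeta^i+\tfrac12=0$; the constant term is $-\tfrac12\ne 0$, again absurd. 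So the prime case is done. For characteristic $p$ as in (b) one argues identically working in $K_0=\overline{\mathbb F_p}$: the element $\zeta$ of order $q$ lies in $\mathbb F_{p^{q-1}}$ since $p$ is a primitive root mod $q$, the minimal polynomial of $\zeta$ over $\mathbb F_p$ is still $\Phi_q(x)=1+x+\dots+x^{q-1}$ (irreducible precisely because $p$ generates $(\mathbb Z/q)^\times$), and $\tfrac12\ne 0$ in $\mathbb F_p$ since $p$ is odd — so the same basis argument runs verbatim.

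The remaining case is $\fchar(K)=0$ with $q=2g+1$ \emph{not} prime; here $K_0=\overline{\mathbb Q}$ and one cannot invoke irreducibility of $\Phi_q$ of degree $q-1$. The cleanest route I would take is via the Galois action: the relation $\sum_{\eps\in J}\eps=-1/2$ lies in $\mathbb Q(\zeta_q)$, which is Galois over $\mathbb Q$ with group $(\mathbb Z/q)^\times$; applying $\sigma_t:\zeta\mapsto\zeta^t$ for each $t\in(\mathbb Z/q)^\times$ and summing, the left side becomes $\sum_{t}\sum_{\eps\in J}\eps^t$. Grouping by the order of $\eps$: for each $\eps$ of exact order $m\mid q$, $m>1$, the multiset $\{\eps^t : t\in(\mathbb Z/q)^\times\}$ hits each primitive $m$-th root of unity exactly $\varphi(q)/\varphi(m)$ times, so $\sum_t\eps^t=\tfrac{\varphi(q)}{\varphi(m)}\mu(m)$ (Ramanujan/Möbius), an \emph{integer}. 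Hence $\sum_t(\sum_{\eps\in J}\eps^t)\in\mathbb Z$, while the right side is $-\varphi(q)/2$, which is not an integer since $\varphi(q)$ is odd only for $q\le 2$ — wait, $\varphi(q)$ is even for $q=2g+1\ge 3$, so $-\varphi(q)/2\in\mathbb Z$ and this particular averaging is inconclusive. The fix is to instead evaluate the relation numerically/archimedeanly: I would bound $\bigl|\sum_{\eps\in J}\eps\bigr|$. Actually the genuinely robust argument, which I would write up, is the following: reduce mod a suitable prime. Pick a rational prime $\ell\equiv 1\pmod q$ that splits completely in $\mathbb Q(\zeta_q)$ and a prime $\mathfrak l$ of $\mathbb Z[\zeta_q]$ over $\ell$; then $\zeta_q$ reduces to a primitive $q$-th root of unity $\bar\zeta\in\mathbb F_\ell^\times$, the reduction map $\mathbb Z[\zeta_q]\to\mathbb F_\ell$ is injective on the relevant lattice, and the relation becomes $\sum_{j}\bar\zeta^{a_j}=-\tfrac12$ in $\mathbb F_\ell$ — but this need not be contradictory for a single $\ell$. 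The clean finish is therefore this: vary $\ell$. Choosing two primes $\ell_1\ne\ell_2$ each $\equiv 1\pmod q$ and splitting, if the relation $\sum\zeta^{a_j}=-1/2$ held in characteristic $0$, then reducing it gives $\sum\bar\zeta^{a_j}=-1/2$ for \emph{every} choice of $\bar\zeta$ of order $q$ in \emph{every} such $\mathbb F_\ell$; summing over all $\varphi(q)$ choices of $\bar\zeta$ in a fixed $\mathbb F_\ell$ gives $\sum_{\text{ord}=q}\sum_j\bar\zeta^{a_j}=\sum_{d\mid\gcd(a_j,q)}\mu(q/d)\cdot(\text{count})$; the left side is $\mu(q)$ if all $a_j$ are coprime to $q$ and in general a small integer — against $-\varphi(q)/2$, and since $\varphi(q)\ge 4$ we can choose $\ell>\varphi(q)$ so that $-\varphi(q)/2\not\equiv$ that integer mod $\ell$ unless equality holds over $\mathbb Z$, reducing to the $\mathbb Z$-statement $\sum_{\text{primitive }\zeta_q}\sum_j\zeta^{a_j}=-\varphi(q)/2$, which fails because the left side is an integer of absolute value at most $g\varphi(q)$ but is in fact, by the Ramanujan-sum computation above, equal to $\sum_j c_q(a_j)$ where $c_q(a)=\sum_{d\mid\gcd(a,q)}d\,\mu(q/d)$ — a sum of $g$ integers each equal to $\mu(q/\gcd(a_j,q))\varphi(q)/\varphi(q/\gcd(a_j,q))$, and one checks this can never equal $-\varphi(q)/2$ by a short case analysis on the divisors of $q$.

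**The main obstacle.** The prime case — hence all of hypothesis (b) and the bulk of (a) — is genuinely short via the cyclotomic-basis argument and is what I would lead with. The real difficulty is the composite-$q$ subcase of (a): there $\Phi_q$ has low degree, $1/2$ can fail to be ``obviously independent,'' and one must use either the Galois-averaging / Ramanujan-sum identity $\sum_{d\mid\gcd(a,q)}d\,\mu(q/d)$ together with a finite check that no choice of $g$ such values sums to $-\varphi(q)/2$, or a more hands-on integrality-of-algebraic-integers argument. I expect the write-up to spend most of its length on exactly this: reducing $\sum_{\eps\in J}\eps=-1/2$ to an integer identity via Galois conjugation, recognizing the resulting sum as a sum of Ramanujan sums $c_q(a_j)$, and ruling out the target value $-\varphi(q)/2$; the inequality $g\ge 2$ (so $\#I,\#(\complement I)\ge 2$, which is what lets us pick the four distinct $\eta_i$ in the first place) is used already, and $q=2g+1\ge 5$ keeps the divisor bookkeeping small.
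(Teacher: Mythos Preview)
Your reduction via Lemma~\ref{l2} to the statement ``no $g$-element $J\subset M(2g+1)$ satisfies $\sum_{\eps\in J}\eps=-1/2$'' is exactly what the paper does (the paper keeps it in the equivalent form $\sum_{\eps\in J}(\eps-1)=-(2g+1)/2$). Your treatment of hypothesis~(b) via the $\mF_p$-basis $1,\zeta,\dots,\zeta^{q-2}$ is correct and is essentially a rephrasing of the paper's argument, which instead observes that the two polynomials $s_J(t)=\tfrac12+\sum_{\eps\in J}t^{\log_\gamma\eps}$ and $s_{\complement J}(t)$ both vanish at $\gamma$, hence are both divisible by its degree-$2g$ minimal polynomial, forcing $1/\gamma$ to lie in both $J$ and $\complement J$.

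The genuine gap is your characteristic-zero argument for composite $q=2g+1$. The Galois-averaging route you sketch does not close: the averaged identity $\sum_{j}c_q(a_j)=-\varphi(q)/2$ \emph{can} hold. For $q=9$ one has $c_9(a)=0$ when $3\nmid a$ and $c_9(a)=-3$ when $a\in\{3,6\}$, so any $4$-element $J$ containing exactly one of $\zeta^3,\zeta^6$ gives $\sum_j c_9(a_j)=-3=-\varphi(9)/2$. Thus the ``short case analysis on the divisors of $q$'' you defer to cannot succeed as stated, and the reductions mod split primes $\ell$ add nothing beyond this same averaged identity.

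The fix is a one-liner you overlooked, and it is precisely the paper's argument: in characteristic~$0$ the sum $\sum_{\eps\in J}\eps$ is a sum of roots of unity, hence an algebraic integer, whereas $-1/2\in\QQ\setminus\mZ$ is not. This disposes of \emph{all} of hypothesis~(a), prime and composite $q$ alike, with no casework. Your cyclotomic-basis argument for prime $q$ is correct but unnecessary once you notice this.
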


\begin{proof}
         Let us consider the polynomial     $ [\Psi_{J,b},\Psi_{\complement J,b}]_{\eta_1,\eta_2}$ and prove that its degree with respect to  $b$ is $2g-1$.

 We want to apply Lemma \ref{l2} to $r(x)=\Psi_{J,b}(x), s(x)=\Psi_{\complement J,b}$. In the notation of  Lemma \ref{l2}, we have
 $$\{\alpha_1, \dots \alpha_g\}=\eta(J):=\{\eta(\eps)\mid \eps\in J\},$$
 $$\{\beta_1, \dots \beta_g\}=\eta(\complement J):=\{\eta(\eps)\mid \eps\in \complement J\}.$$
          Let $J=\{\alpha_1,\ldots,\alpha_g\}$ and $\complement J=\{\beta_1,\ldots, \beta_g\}$ (of course,
          $J\cup \complement J=\{\eta_1,\ldots,\eta_{2g}\}$; we introduced $\alpha_i$ and $\beta_j$ for convenience).
          Let $\eta_1=\alpha_i$ and $\eta_2=\alpha_j$, $i\neq j$.

          Let us assume that the degree of $ [\Psi_{J,b},\Psi_{\complement J,b}]_{\eta_1,\eta_2}$ with respect to $b$ is less than $2g-1$.
          Then the coefficient  of  $ [\Psi_{J,b},\Psi_{\complement J,b}]_{\eta_1,\eta_2}$ at $b^{2g-1}$ is zero. By Lemma~\ref{l2} applied to $\gamma=\eta_1$ and $\delta=\eta_2$,
          $$\sum_{\eps \in J} \frac{1}{\eta(\eps)}=\sum_{\eps \in\complement  J} \frac{1}{\eta(\eps)}.$$
         Since
       $$\begin{aligned} \sum_{\eps \in J} \frac{1}{\eta(\eps)}+\sum_{\eps \in\complement  J} \frac{1}{\eta(\eps)}=
       \sum_{\eps\in M(2g+1)}1/\eta(\eps)\\=\sum_{\eps\in M(2g+1)}(\eps-1)=
       -2g+\sum_{\eps\in M(2g+1)}\eps=-2g-1,\end{aligned}$$
         we have
         $$\sum_{\eps \in J} \frac{1}{\eta(\eps)}=-\frac{2g+1}{2},$$
         which gives the required contradiction  if $\fchar(K)=0$, because
         $$ \sum_{\eps \in J} \frac{1}{\eta(\eps)}= \sum_{\eps \in J} (\eps-1)$$
         is an algebraic integer while  $\frac{2g+1}{2}$ is {\sl not} an algebraic integer.
         The obtained contradiction implies that $p:=\fchar(K)>0$ and  therefore
         $\frac{2g+1}{2}\in \mF_p$.

         Let
         $$\gamma \in M(2g+1)\in K_0^{*}\subset K^{*}$$
         be a primitive $(2g+1)$th root of unity in $K$.  By our assumptions,  the minimal polynomial of $\gamma$ over $\mF_p$ has degree $2g$.

         Let us consider the bijective discrete logarithm map
         $$\log_{\gamma}: M(2g+1) \cong \{1,2, \dots,  2g-1,2g\}$$
         defined by
         $$\eps=\gamma^{\log_{\gamma}(\eps)} \ \forall \eps\in M(2g+1).$$
         For example,
         $$\log_{\gamma}(\gamma)=1, \ \log_{\gamma}(1/\gamma)=2g.$$
         Then $\gamma$ is a common root of two nonconstant polynomials
         $$s_J(t)=\frac{1}{2}+ \sum_{\eps \in J} t^{\log_{\gamma}(\eps)} \in \mF_p[t], \
         s_{\complement J}(t)=\frac{1}{2}+ \sum_{\eps \in \complement J} t^{\log_{\gamma}(\eps)} \in \mF_p[t].$$
        Clearly, the degrees of both $s_J(t)$ and $s_{\complement J}(t)$ do not exceed $2g$.
        On the other hand, since both $s_J(t)$ and $s_{\complement J}(t)$ are divisible by the minimal degree $2g$ polynomial of $\gamma$,
         $$\deg(s_J(t))=2g,  \ \deg(s_{\complement J}(t))=2g.$$
         If $J$  does not contain $1/\gamma$, then the degree of $\deg(s_J(t))<2g$, which is wrong.  Hence $J$  does  contain $1/\gamma$, which implies that  $\deg(s_{\complement J}(t))<2g$, which is also wrong.  The obtained contradiction implies that  the polynomial     $ [\Psi_{J,b},\Psi_{\complement J,b}]_{\eta_1,\eta_2}$  has degree $2g-1$ in $b$.

Replacing $J$ by $\complement J$ and $\eta_1,\eta_2$ by $\eta_3$, $\eta_4$, we obtain that the degree of $[\Psi_{J,b},\Psi_{\complement J,b}]_{\eta_3,\eta_4}$ in $b$ is also $2g-1$.

         \end{proof}

           \begin{prop}\label{prop2}
         The  polynomial
         $$ [\Psi_{J,b},\Psi_{\complement J,b}]_{\eta_1,\eta_2}[\Psi_{J,b},\Psi_{\complement J,b}]_{\eta_3,\eta_4}b- [\Eta_{\complement I}, \Psi_{J,b}]_{\eta_1,\eta_2}[\Eta_I, \Psi_{\complement J,b}]_{\eta_3,\eta_4} \in K_0[b]$$
         has degree $4g-1$  with respect to $b$ (and, in particular, is nonzero).
         \end{prop}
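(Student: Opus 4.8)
The plan is to prove the proposition by a direct comparison of the degrees in $b$ of the two terms making up the displayed polynomial; the decisive input, already at our disposal, is Lemma~\ref{main3}. Recall that this polynomial is exactly the difference of the two sides of~\eqref{main2}, so establishing that it is nonzero of degree $4g-1$ in $b$ is what we need in order to conclude (via algebraic closedness of $K_0$) that every admissible $b$ lies in $K_0$.

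First I would treat the term $[\Psi_{J,b},\Psi_{\complement J,b}]_{\eta_1,\eta_2}\,[\Psi_{J,b},\Psi_{\complement J,b}]_{\eta_3,\eta_4}\,b$. By Lemma~\ref{main3}, each of the two bracket polynomials $[\Psi_{J,b},\Psi_{\complement J,b}]_{\eta_1,\eta_2}$ and $[\Psi_{J,b},\Psi_{\complement J,b}]_{\eta_3,\eta_4}$ has degree exactly $2g-1$ in $b$; since the leading coefficient of a product in the polynomial ring $K_0[b]$ is the product of the leading coefficients, this term has degree exactly $(2g-1)+(2g-1)+1=4g-1$ in $b$, with nonzero leading coefficient.

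Next I would bound the term $[\Eta_{\complement I},\Psi_{J,b}]_{\eta_1,\eta_2}\,[\Eta_I,\Psi_{\complement J,b}]_{\eta_3,\eta_4}$. The polynomials $\Eta_I(x),\Eta_{\complement I}(x)\in K_0[x]$ do not involve $b$, whereas for any scalar $c$ the value $\Psi_{J,b}(c)=\prod_{\eps\in J}(c+b\,\eta(\eps))$ is a polynomial in $b$ of degree at most $g$, and similarly for $\Psi_{\complement J,b}(c)$. Hence each of the two brackets $[\Eta_{\complement I},\Psi_{J,b}]_{\eta_1,\eta_2}$ and $[\Eta_I,\Psi_{\complement J,b}]_{\eta_3,\eta_4}$ has degree at most $g$ in $b$, so their product has degree at most $2g$ in $b$.

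Finally, since $g>1$ forces $2g<4g-1$, the top-degree term of the first summand cannot be cancelled, so the difference has degree exactly $4g-1$ in $b$ and is in particular a nonzero element of $K_0[b]$ (that the coefficients lie in $K_0$ is immediate, since all the $\eta_i$, all the values $\eta(\eps)$ for $\eps\in M(2g+1)$, and all coefficients of $\Eta_I$ and $\Eta_{\complement I}$ already do). The one genuinely substantial step in this chain is Lemma~\ref{main3} — this is where the hypotheses (a)--(b) on $\fchar(K)$ enter, and where the cyclotomic/discrete-logarithm bookkeeping is carried out — but we may take it as given; granted it, the proposition is just this routine degree count.
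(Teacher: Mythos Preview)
Your proof is correct and follows essentially the same route as the paper's: invoke Lemma~\ref{main3} to see that the first term has degree exactly $4g-1$ in $b$, bound the second term by degree $2g$, and conclude since $2g<4g-1$. The only differences are cosmetic---you spell out the inequality $2g<4g-1$ and the provenance of the $K_0$-coefficients, both of which the paper leaves implicit.
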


         \begin{proof} By Lemma \ref{main3} the degree of the polynomial $$ [\Psi_{J,b},\Psi_{\complement J,b}]_{\eta_1,\eta_2}[\Psi_{J,b},\Psi_{\complement J,b}]_{\eta_3,\eta_4}$$ with respect to $b$ is $4g-2$.
         Since the degree of  $$ [\Eta_{\complement I}, \Psi_{J,b}]_{\eta_1,\eta_2}[\Eta_I, \Psi_{\complement J,b}]_{\eta_3,\eta_4}$$ with respect to $b$ does not exceed $2g$, the degree of $$ [\Psi_{J,b},\Psi_{\complement J,b}]_{\eta_1,\eta_2}[\Psi_{J,b},\Psi_{\complement J,b}]_{\eta_3,\eta_4}b- [\Eta_{\complement I}, \Psi_{J,b}]_{\eta_1,\eta_2}[\Eta_I, \Psi_{\complement J,b}]_{\eta_3,\eta_4}$$ is $4g-1$.
     \end{proof}
Let us complete the proof of Theorems \ref{finiteness3} and \ref{primeFields}. We proved that the set $S(g,K)$ is finite. There are exactly $\binom{2g}{g}^2$ pairs
of subsets $I,J\subset M(2g+1)$. For each such pair, $b$ satisfies equation \eqref{main2},  which has degree $4g-1$ by Proposition \ref{prop2}.
It follows from assertion (iii) of Lemma~\ref{lem} that  for each triple  $I,J,b$ we have at most 9 possible pairs $\lambda_1,\lambda_2$ such that the triple $\lambda_1,\lambda_2, b$ satisfies \eqref{main}.
 This implies the
desired
estimate
$$\#(S(g,K)) \le 9 (4g-1) \binom{2g}{g}^2,$$
 which ends the proof of Theorem \ref{finiteness3}.
 Theorem~\ref{primeFields} follows from assertion (ii) of Lemma \ref{lem}.
\begin{Rem}
As was pointed out by the referee, the bound in Theorem \ref{finiteness3} may be improved slightly, using the following observation.
Let $\mathcal C_b$ be a curve having three pairs of   order $2g+1$ points whose $x$-coordinates are $0,-1,b$.
Then $C_b$ is isomorphic to $\mathcal C_{b^{\prime}}$, where
$$b^{\prime}\in\{1/b,-(b+1),-1/(b+1),-(b+1)/b,-b/(b+1)\}.$$
\end{Rem}
{\bf Acknowledgements}. The authors are grateful to Oksana Podkopaeva for reading a preliminary version of the manuscript and making valuable comments. Our special thanks go the referee for careful reading of the manuscript and helpful comments.

 \end{document}